\newtheorem{Th}{Theorem}[section]
\newtheorem{Lemma}[Th]{Lemma}
\newtheorem{Cor}[Th]{Corollary}
 \theoremstyle{definition}
\newtheorem{Def}[Th]{Definition}
\newtheorem{Rem}[Th]{Remark}
\newtheorem{?}[Th]{Problem}
\newcommand{\ZZ}{\ensuremath{\mathbb{Z}}}
\newcommand{\RR}{\ensuremath{\mathbb{R}}}
\def \e {\varepsilon}
\begin{document}

\title[Difference sets and exponential sums II: cubic residues]{Difference sets and positive exponential sums II: cubic residues in cyclic groups}

\author[M. Matolcsi]{M\'at\'e Matolcsi}
\address{M\'at\'e Matolcsi: Budapest University of Technology and Economics (BME),
H-1111, Egry J. u. 1, Budapest, Hungary, and Alfr\'ed R\'enyi Institute of Mathematics, POB 127 H-1364 Budapest, Hungary.}
\email{matomate@renyi.hu}

\author[I. Z. Ruzsa]{Imre Z. Ruzsa}
\address{I. Z. R.: Alfr\'ed R\'enyi Institute of Mathematics,
Hungarian Academy of Sciences POB 127 H-1364 Budapest, Hungary}
\email{ruzsa@renyi.hu}

\begin{abstract}
By constructing suitable nonnegative exponential sums we give upper bounds on the cardinality of any set $B_q$ in cyclic groups $\ZZ_q$ such that the difference set $B_q-B_q$ avoids cubic residues modulo $q$.
\end{abstract}

\thanks{M. Matolcsi was supported by the NKFIH grant no. K132097 and K129335, I Z. Ruzsa was supported by NKFIH grant no. K129335.}

\maketitle

\bigskip

\section{Introduction}

\medskip

This paper is a follow-up to \cite{int1}, where we described the general properties connecting positive exponential sums and difference sets. In this second part, we apply the techniques of \cite{int1} to investigate the intersective properties of cubic residues in cyclic groups $\ZZ_q$. That is, by constructing suitable nonnegative exponential sums we obtain upper bounds on the cardinality of any set $B_q \subset \ZZ_q$ such that the difference set $B_q-B_q$ avoids cubic residues modulo $q$. We will turn to the case of quadratic residues in a later publication, but let us mention here that Gabdullin \cite{gab} has recently achieved a non-trivial upper bound by a different method.

\medskip

The ultimate aim of this research is to extend these results to the case of integers, i.e. give a strong upper bound on the cardinality of a set $B\subset \{0, 1, \dots, N\}$ such that $B-B$ does not contain any cubes or, more generally, $k$th powers for some fixed $k$.

\medskip

In order to see the relation of the modular case to the integer case we introduce the following notions.

\medskip

We use the standard notation $e(x)=e^{2\pi i x}$ throughout the paper.

\begin{Def}
A nonnegative function
\begin{equation}\label{witness}
f^{(n)}(x)=a_0+ \sum_{j=1}^{n-1} a_j (e(j^k x)+e(-j^k x)) \ge 0, \ x\in [0,1], \ \ f^{(n)}(0)=1,
\end{equation}
is called a $k^{th}$ {\it power witness function} of order $n$.  Similarly, a nonnegative function
\begin{equation}\label{witnessq}
g^{(q)}(y)=b_0+ \sum_{j=1, q\nmid j^k}^{q-1} b_j (e(j^k y/q)+e(-j^k y/q)) \ge 0, \ y=0,1,\dots , q-1, \ g^{(q)}(0)=1,
\end{equation}
is called a $k^{th}$ {\it power modular witness function}, modulo $q$. (The dependence of $a_j$ on $n$, and $b_j$ on $q$ will usually be suppressed in the notation.)
\end{Def}

\medskip

The normalization $f^{(n)}(0)=g^{(q)}(0)=1$ is only for convenience, and will be used throughout this note. We remark that the coefficients $a_j$ are automatically real as they are the Fourier (cosine) coefficients of the real valued function $f^{(n)}$. Also, we can assume without loss of generality (after averaging over $j$ if necessary) that $b_j=b_{j'}$ whenever $j^k\equiv \pm j'^k$ $mod \ q$. This makes the coefficients $b_j$ real (because $g^{(q)}$ is a real valued, even function, and $b_j$ are basically its Fourier coefficients, so they must be conjugate symmetric $b_j=\overline{b}_{-j}$, and they are also symmetric $b_j=b_{-j}$ by the assumption above, so they are in fact real).

\medskip

Let us clarify here the connection of these definitions to the results of \cite{int1}. Consider the cyclic group $G=\ZZ_q$ of order $q$, and let $R_0^{(k)}$ denote the set of $k$th power residues $mod \ q$ (including 0). Let $R_s^{(k)}$ denote the symmetrized set $R_s^{(k)}=R_0^{(k)}\cup -R_0^{(k)}$. We can think of the coefficients $b_j$ as a function $h: G\to \RR$, supported on $R_s^{(k)}$, defined in the following way: for any $r\in R_s^{(k)}$ let $h(r)=\sum_{j^3\equiv \pm r} b_j$. The Fourier transform of $h$ is then $\hat{h}=g^{(q)}\ge 0$. Therefore, the function $h$ fits into the class of functions used in the definition of $\lambda(R_s^{(k)})$ in \cite{int1}.

\medskip

We will restrict our attention to the case $k=3$ in this paper, the treatment of other odd values of $k$ being similar. We will use the following notations in accordance with the ones introduced in \cite{int1}.

\begin{Def}\label{ldef}
Let  $C^{(q)}$ denote the set of non-zero  cubic residues modulo $q$, and let  $C_0^{(q)}=C^{(q)}\cup \{0\}$. Recall from \cite{int1} that  $\delta(C_0^{(q)})$ denotes the maximal density (i.e. $|B_q|/q$) of a set $B_q\subset \ZZ_q$ such that $(B_q-B_q)\cap C^{(q)}=\emptyset$. Also, $\lambda(C_0^{(q)})$ denotes the minimal possible value of the constant term $b_0$ in expression \eqref{witnessq} for $k=3$.
\end{Def}

\medskip

In this note we will be interested in upper bounds on $\delta(C_0^{(q)})$, but let us also mention here what is known about lower bounds. For a prime $q=3k+1$, it is clear by the Ramsey number estimate $R(s,s)\le 4^s$ that one can find a set $B\subset \ZZ_q$ of size at least $\lfloor \log_4 q \rfloor$ such that $(B_q-B_q)\cap C^{(q)}=\emptyset$. (After applying the Ramsey estimate, one can either find a suitable set $B$, or a set $B'\subset \ZZ_q$, $|B'|\ge \lfloor \log_4 q \rfloor$ such that $B'-B'\subset C_0^{(q)}$. In the latter case, it suffices to consider $B=tB'$ for any non-cubic residue $t\in \ZZ_q.$) Also, a construction of a set $B'$ such that $B'-B'\subset C_0^{(q)}$, $\log_6 q\lessapprox |B|$, not using Ramsey theory, appears in Theorem 3 of \cite{cohen}. For a prime $q=3k+2$ the problem is trivial as $|B|\le 1$. For a square-free integer $q=s_1s_2\dots s_m$, where $s_i=3k_i+1$, a straightforward direct product construction (see Section 8 in \cite{int1}) gives a suitable set $B$ with $|B|\ge \prod_{i=1}^m \lfloor \log_4 s_i \rfloor$. We are not aware of any stronger results, and numerical experiments seem to indicate that the logarithmic size is not far from the truth for primes $q=3k+1$.

\medskip

We will obtain upper bounds on $\lambda(C_0^{(q)})$, and these will automatically imply upper bounds on $\delta(C_0^{(q)})$. In particular, according to Theorem 1.4 in \cite{int1}, any modular witness function $g^{(q)}$ in equation \eqref{witnessq} testifies an upper bound $|B_q|\le b_0 q$ for the cardinality of a set $B_q\subset \ZZ_q$ such that $B_q-B_q$ avoids the $k^{th}$-power residues modulo $q$. Similarly, a $k^{th}$ power witness function $f^{(n)}$ will lead to an estimate $|B_0|\lessapprox a_0 n^k$ for the cardinality of a set $B_0\subset [1,\dots n^k]$ such that $B_0-B_0$ avoids the $k^{th}$ powers. Our aim, therefore, is to minimize the values of $a_0$ and $b_0$ in equations \eqref{witness} and \eqref{witnessq}. We will only be concerned with the modular case in this note.

\medskip

Notice that $f^{(n)}(x)$ automatically induces a modular witness function modulo $q=n$ by defining $g^{(n)}(y)=f^{(n)}(y/n)$, and if $n$ is squarefree then $f^{(n)}$ and $g^{(n)}$ have the same constant terms $a_0=b_0$. Therefore, constructing modular $k^{th}$ power witness functions $g^{(q)}$ is {\it formally easier} than constructing $k^{th}$ power witness functions $f^{(n)}$. Conversely, in later publications of this series of papers we hope to construct a family of witness functions $f^{(n)}$ assuming that we already have a family of {\it self-compatible} modular witness functions $g^{(q)}$ at our disposal. Self-compatibility is a natural property defined here.

\begin{Def}
A family of $k^{th}$ power modular witness functions $g^{(q)}$ ($q=1,2, \dots$) of the form \eqref{witnessq} is called {\it self-compatible} if $g^{(q_1)}(y_1)=g^{(q_2)}(y_2)$ whenever $y_1/q_1=y_2/q_2$.
\end{Def}

This paper will be devoted entirely to constructing a self-compatible family of modular witness functions for $k=3$, which is a very interesting problem in itself.

\section{Cubic residues}\label{sec:cubic}

Somewhat surprisingly, the case of cubic residues is considerably easier to handle than that of quadratic residues. This is due to the fact that $k=3$ is an odd number and therefore -1 is automatically a cubic residue modulo $q$ for any $q$. This implies that the set of cubic residues modulo $q=p_1^{\alpha_1}\dots p_r^{\alpha_r}$ is always {\it symmetric} to $0$. Also, it is equal to the direct product of the sets of cubic residues modulo $p_1^{\alpha_1}, \ \dots, \ p_r^{\alpha_r}$. Therefore the results on direct products in \cite[Section 8]{int1} can be invoked, and the problem reduces to forming nonnegative exponential sums with cubic residue frequencies in cyclic groups $\ZZ_{p^\alpha}$ of prime-power order. The self-compatibility property will be an automatic consequence of the construction.

\medskip

We first consider the case of squarefree moduli $q$, which is of independent interest in itself.

\medskip

\begin{Th}\label{sfc}
Let $q=p_1p_2\dots p_rs_1s_2\dots s_m$ be a squarefree integer, where $p_i$ denote primes of the form $3k+2$, $s_l$ denote primes of the form $3k+1$, and if $3|q$ then we list the prime 3 among the $p_i$. There exists a cubic witness function modulo $q$, $g^{(q)}(y)=b_0+ \sum_{j=1}^{q-1} b_j (e(j^3 y/q)+e(-j^3 y/q)) \ge 0$, $y=0,1,\dots , q-1$, $g^{(q)}(0)=1$, such that
\begin{equation}\label{sfcbound}
b_0\le \left ( \prod_{i=1}^{r}\frac{1}{p_i} \right ) \left ( \prod_{l=1}^{m}\frac{2}{\sqrt{s_l}} \right ) \le \frac{2^m}{\sqrt{q}}.
\end{equation}
That is, with the notation of Definition \ref{ldef}, we have
\begin{equation}\label{csflambda}
\lambda(C_0^{(q)})\le \frac{2^m}{\sqrt{q}}=O_\varepsilon(q^{-\frac{1}{2}+\varepsilon})
\end{equation}
for every $\varepsilon >0$.
\end{Th}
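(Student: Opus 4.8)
The plan is to reduce the squarefree case to a single prime via the direct-product machinery of \cite{int1} and then build an explicit witness at each prime, the prime $p\equiv 1\ (\mathrm{mod}\ 3)$ being the heart of the matter. Since $k=3$ is odd, $-1$ is a cube modulo every prime, so each $C_0^{(p)}$ is symmetric, and by the Chinese Remainder Theorem $C_0^{(q)}$ is the direct product of the sets $C_0^{(p)}$ over the prime divisors $p\mid q$. I would therefore set $g^{(q)}(y)=\prod_{p\mid q}g^{(p)}(y\bmod p)$; by the direct-product results of \cite{int1} this is again a modular cubic witness function of the required frequency form, whose constant term is the product $\prod_{p\mid q}b_0^{(p)}$. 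It then suffices to produce one witness per prime, after which the bound factorizes.

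For $p\equiv 2\ (\mathrm{mod}\ 3)$ (and for $p=3$) cubing permutes $\ZZ_p$, so every residue is a cube and I can take $g^{(p)}(y)=\frac1p\sum_{t=0}^{p-1}e(ty/p)$, the Dirac mass at $0$: it is nonnegative, satisfies $g^{(p)}(0)=1$, has all frequencies cubic, and constant term $b_0=1/p$. For $p\equiv 1\ (\mathrm{mod}\ 3)$ the nonzero cubes form the index-$3$ subgroup $H\subset\ZZ_p^\ast$, and I would seek a witness that is constant on $H$, namely $g^{(p)}(y)=b_0+\beta\sum_{r\in H}e(ry/p)$. Since $j\mapsto j^3$ is three-to-one and $-1\in H$, this has the shape \eqref{witnessq} with $b_j=\beta/6$, and the task reduces to choosing $b_0,\beta\ge 0$ so that $g^{(p)}\ge 0$ with $g^{(p)}(0)=1$ and $b_0\le 2/\sqrt p$.

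The crux is the nonnegativity, and this is where I expect the only real work. Writing the Gaussian period $\eta(y)=\sum_{r\in H}e(ry/p)$ in terms of the cubic exponential sum $W(y)=\sum_{x\in\ZZ_p}e(yx^3/p)=1+3\eta(y)$ (for $y\ne 0$), I note that $W(y)$ is real and, by Weil's bound applied to the cubic $yx^3$, satisfies $|W(y)|\le 2\sqrt p$, hence $\eta(y)\ge -(2\sqrt p+1)/3$ uniformly in $y\ne 0$. Taking $\beta=3/(p+2\sqrt p)$ and $b_0=(2\sqrt p+1)/(p+2\sqrt p)$ then gives $g^{(p)}(0)=b_0+\beta(p-1)/3=1$ and $g^{(p)}(y)\ge b_0-\beta(2\sqrt p+1)/3=0$ for $y\ne 0$; a one-line computation shows $b_0=(2\sqrt p+1)/(p+2\sqrt p)\le 2/\sqrt p$. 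The main obstacle is precisely this uniform control of the three cyclotomic periods: without the estimate $|W(y)|\le 2\sqrt p$ one cannot pin the constant down to $2$.

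Finally, multiplying the prime witnesses yields constant term $\prod_{i}\frac1{p_i}\prod_{l}\frac{2\sqrt{s_l}+1}{s_l+2\sqrt{s_l}}\le\prod_{i}\frac1{p_i}\prod_{l}\frac2{\sqrt{s_l}}$, and the crude bound $1/p_i\le 1/\sqrt{p_i}$ together with $q=\prod_i p_i\prod_l s_l$ gives $\le 2^m/\sqrt q$; since $2^m=q^{o(1)}$ this is $O_\varepsilon(q^{-1/2+\varepsilon})$. The self-compatibility asserted in the surrounding text should be visible directly from the uniform description of each factor through $W$, but it plays no role in the stated bound.
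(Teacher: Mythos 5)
Your proposal is correct and follows essentially the same route as the paper: the same trivial witness at primes $p\equiv 2\pmod 3$ (and at $3$), literally the same witness at primes $s\equiv 1\pmod 3$ (your $b_0=(2\sqrt{s}+1)/(s+2\sqrt{s})$, $\beta=3/(s+2\sqrt{s})$ coincide with the coefficients in the paper's equation \eqref{3kp1}, since $\sum_{j=1}^{s-1}e(j^3y/s)=3\eta(y)$), and the same direct-product assembly via \cite[Theorem 8.1]{int1}. The only cosmetic difference is that you cite the Weil bound $\ab{\sum_x e(yx^3/p)}\le 2\sqrt{p}$, while the paper proves that very estimate by splitting the complete cubic sum into three cubic-character (Gauss) sums of modulus $\sqrt{s}$ — for the monomial $yx^3$ these are the same argument.
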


\begin{proof}
For a prime $p=3k+2$, all elements of $\ZZ_{p}$ are cubic residues. Therefore, we can take the trivial witness function modulo $p$,
\begin{equation}\label{3kp2}
g^{(p)}(y)=\frac{1}{p}\sum_{j=0}^{p-1} e(j^3y/p),
\end{equation}
with the constant term being $\frac{1}{p}$. That is, $\lambda(C_0^{(p)})\le \frac{1}{p}$ (in fact, equality holds). The same is true for $p=3$, which is listed among the $p_i$ if $3|q$.

\medskip

For a prime $s=3k+1$ the set $C^{(s)}\subset \ZZ_{s}$ of non-zero cubic residues modulo $s$ is symmetric to zero, and consists of $\frac{s-1}{3}$ elements. Denoting the cubic multiplicative characters on $\ZZ_s$ by $\chi_0, \chi_1, \chi_2$ (with $\chi_0$ being the principal character), we have
\begin{equation}
\sum_{j=1}^{s-1}e(j^3y/s)=\sum_{l=1}^{s-1}e(ly/s)\chi_0(l)+\sum_{l=1}^{s-1}e(ly/s)\chi_1(l)+\sum_{l=1}^{s-1}e(ly/s)\chi_2(l)\ge -1-2\sqrt{s},
\end{equation}
because the last two sums have absolute value $\sqrt{s}$ (see \cite[Lemma 4.3]{vaughan}). Therefore, after normalization, we may take
\begin{equation}\label{3kp1}
g^{(s)}(y)=\frac{2\sqrt{s}+1}{2\sqrt{s}+s}+\frac{1}{2\sqrt{s}+s}\sum_{j=1}^{s-1}e(j^3y/s)\ge 0, \ (y=0, \dots, s-1),\ g^{(s)}(0)=1,
\end{equation}
and the constant term satisfies $\frac{2\sqrt{s}+1}{2\sqrt{s}+s} \le \frac{2}{\sqrt{s}}$. That is, $\lambda(C_0^{(s)})\le \frac{2}{\sqrt{s}}$

\medskip

Finally, the group $\ZZ_q$ is the direct product of the groups $\ZZ_{p_i}$ and $\ZZ_{s_l}$, and the set of cubic residues $C_0^{(q)}$ modulo $q$ is symmetric to zero and equals the direct product of the sets $C_0^{(p_i)}$ and $C_0^{(s_l)}$. Therefore, the direct product construction of \cite[Theorem 8.1]{int1} can be applied, and equation (35) in \cite{int1} implies $\lambda(C_0^{(q)})=\left ( \prod_{i=1}^{r}\lambda(C_0^{(p_i)}) \right ) \left ( \prod_{l=1}^{m}\lambda(C_0^{(s_l)}) \right )$, and the bound given in \eqref{sfcbound} follows. Finally, estimate \eqref{csflambda} follows from the fact that $2^m=O(q^\varepsilon)$. \end{proof}

\medskip

\begin{Rem}
For primes of the form $s=3k+1$ the estimate $\lambda(C_0^{(s)})\le \frac{2}{\sqrt{s}}$ is asymptotically optimal, and therefore the exponent in the bound $\lambda(C_0^{(q)})=O(q^{-\frac{1}{2}+\varepsilon})$ cannot be improved.
\end{Rem}

\medskip

Via Theorem 1.4 in \cite{int1} the bound \eqref{csflambda} above immediately implies the same upper bound for the density of sets avoiding cubic residues:

\begin{Cor}
For any $\varepsilon >0$ and any square-free positive integer $q$, the density $\delta(B_q)$ of any set $B_q \subset \ZZ_q$ such that the difference set $B_q-B_q$ avoids cubic residues modulo $q$ satisfies $\delta(B_q)=O_\varepsilon(q^{-\frac{1}{2}+\varepsilon})$.
\end{Cor}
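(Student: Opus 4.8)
The plan is to deduce the Corollary directly from Theorem~\ref{sfc} via the linear-programming duality between witness functions and avoiding sets, so the argument is almost entirely a matter of assembling results that are already available. First I would unwind the quantities: by Definition~\ref{ldef}, $\delta(C_0^{(q)})$ is the \emph{maximal} density of a set $B_q\subset\ZZ_q$ with $(B_q-B_q)\cap C^{(q)}=\emptyset$, so the density $\delta(B_q)$ of \emph{any} such set is automatically at most $\delta(C_0^{(q)})$. This reduces the whole statement to bounding the single extremal quantity $\delta(C_0^{(q)})$.

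Second, I would invoke Theorem~1.4 of \cite{int1}, which asserts precisely that a modular witness function $g^{(q)}$ with constant term $b_0$ certifies $\delta(C_0^{(q)})\le b_0$, and hence $\delta(C_0^{(q)})\le\lambda(C_0^{(q)})$. For completeness I would indicate the mechanism behind this duality. Writing $f=\mathbf 1_{B_q}$, one pairs the autocorrelation of $f$ against the function $h$ (supported on $C_0^{(q)}$, with $\hat h=g^{(q)}\ge 0$) described in the discussion preceding Definition~\ref{ldef}. Since $B_q-B_q$ meets $C_0^{(q)}$ only at $0$, the physical-side pairing collapses to a single diagonal term proportional to $|B_q|\,b_0$; the Fourier-side evaluation of the same pairing, by contrast, is bounded below by the zero mode alone because $g^{(q)}\ge 0$, contributing a term of order $|B_q|^2/q$. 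Comparing the two expressions for one and the same pairing yields $|B_q|\le b_0\,q$, i.e. $\delta(C_0^{(q)})\le\lambda(C_0^{(q)})$.

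Third, I would substitute the bound furnished by Theorem~\ref{sfc}, namely $\lambda(C_0^{(q)})\le 2^m/\sqrt q$, where $m$ counts the prime factors of $q$ of the form $3k+1$. Chaining the three steps gives $\delta(B_q)\le\delta(C_0^{(q)})\le\lambda(C_0^{(q)})\le 2^m/\sqrt q$. Finally I would discharge the factor $2^m$ exactly as in the proof of Theorem~\ref{sfc}: since $m\le\omega(q)$ and $2^{\omega(q)}\le d(q)=O_\varepsilon(q^\varepsilon)$ by the classical divisor bound, we obtain $2^m=O_\varepsilon(q^\varepsilon)$ for every $\varepsilon>0$, whence $\delta(B_q)=O_\varepsilon(q^{-\frac12+\varepsilon})$.

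There is no genuine obstacle here, since the analytic content is already packaged in Theorem~\ref{sfc} and the duality is imported from \cite{int1}. The only points requiring care are bookkeeping ones: correctly reducing an arbitrary avoiding set to the extremal density $\delta(C_0^{(q)})$, and absorbing $2^m$ into $q^\varepsilon$ through the divisor-type estimate rather than a crude bound. If one wanted a self-contained proof, the one substantive ingredient to reproduce would be the Fourier duality $\delta\le\lambda$ sketched above; everything else is a direct substitution.
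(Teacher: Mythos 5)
Your proposal is correct and follows exactly the paper's route: the paper deduces the Corollary in one line by applying Theorem~1.4 of \cite{int1} to the bound \eqref{csflambda} of Theorem~\ref{sfc}, which is precisely your chain $\delta(B_q)\le\delta(C_0^{(q)})\le\lambda(C_0^{(q)})\le 2^m/\sqrt{q}=O_\varepsilon(q^{-\frac12+\varepsilon})$. Your extra sketches of the Fourier duality and the divisor-bound estimate $2^m=O_\varepsilon(q^\varepsilon)$ are sound but are already packaged in \cite{int1} and in the proof of Theorem~\ref{sfc}, respectively.
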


\medskip

Now we turn to general (non-squarefree) moduli $q$. The direct product construction can still be applied in this case, but the construction of witness functions modulo prime-powers is slightly more technical.

\medskip

\begin{Lemma}\label{primepower}
Using the notation of Definition \ref{ldef}, we have
\begin{equation}\label{lambdapn3}
\lambda(C_0^{(p^m)})=\lambda(C_0^{(p)})^{[(m+2)/3]}
\end{equation}
for any prime $p\ne 3$ and any integer $m\ge 1$ (the notation $[x]$ stands for the integer part of any number $x$). For $p=3$ we have
\begin{equation}\label{lambda3}
\lambda(C_0^{(3^{3m})})=\lambda(C_0^{(27)})^{m}.
\end{equation}
\end{Lemma}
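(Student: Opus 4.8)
The plan is to compute $\lambda(C_0^{(p^m)})$ by exploiting the $p$-adic valuation structure of the cubic residues, reducing the modulus-$p^m$ problem to the modulus-$p$ problem together with a self-similar copy of the problem modulo $p^{m-3}$. First I would record two structural facts, valid for $p\neq 3$. Since $v_p(y^3)=3v_p(y)$, every nonzero cubic residue has valuation divisible by $3$; and since $3\nmid p^{m-1}$, cubing is a bijection on the Sylow-$p$ subgroup $1+p\ZZ_{p^m}$, so a unit is a cube modulo $p^m$ if and only if its reduction modulo $p$ is a cube modulo $p$. Together these give a disjoint decomposition $C_0^{(p^m)}=U\sqcup p^3 C_0^{(p^{m-3})}$, where $U$ is the set of unit cubes (valuation $0$) and the map $z\mapsto p^3z$ identifies the cubes of valuation at least $3$ with a scaled copy of $C_0^{(p^{m-3})}$ (as $p^3z$ is a cube modulo $p^m$ exactly when $z$ is a cube modulo $p^{m-3}$). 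The number of nonempty shells $\{0,3,6,\dots\}\cap[0,m)$ is $\lceil m/3\rceil=[(m+2)/3]$, which is precisely the exponent to be produced.

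For the upper bound I would realize a \emph{multiplicative} construction, even though $\ZZ_{p^m}$ is not a direct product, the key being that the two pieces of $C_0^{(p^m)}$ occupy disjoint frequency ranges. The unit-cube factor is a witness $g_1(y)=A+B\sum_{j\in(\ZZ_{p^m})^*}e(j^3y/p^m)$; decomposing the inner sum through the cubic characters modulo $p^m$ (which have conductor $p$, so their Gauss sums are supported on the shell $v_p(y)=m-1$) gives $\sum_j e(j^3y/p^m)\ge -p^{m-1}(1+2\sqrt p)$, and normalizing yields $g_1\ge 0$, $g_1(0)=1$ with constant term exactly $\lambda(C_0^{(p)})=\tfrac{2\sqrt p+1}{2\sqrt p+p}$ (and $\tfrac1p$ when $p\equiv 2\bmod 3$), \emph{independently of} $m$. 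Pulling an optimal witness modulo $p^{m-3}$ back through $y\mapsto y\bmod p^{m-3}$ produces a nonnegative $\tilde g$ whose frequencies are the valuation-at-least-$3$ cubes. Since $g_1$ and $\tilde g$ have disjoint nonzero frequency supports (valuation $0$ versus at least $3$), the product $g_1\tilde g$ is again a witness modulo $p^m$ and its mean factorizes, giving $\lambda(C_0^{(p^m)})\le\lambda(C_0^{(p)})\,\lambda(C_0^{(p^{m-3})})$. With the base cases $m=1,2,3$, where $g_1$ alone is forced since there are no higher shells, this induction yields $\lambda(C_0^{(p^m)})\le\lambda(C_0^{(p)})^{[(m+2)/3]}$.

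The harder, and I expect the main, obstacle is the matching lower bound, since $\lambda$ is defined as a minimum and the construction only gives $\le$. I would pass to the linear-programming dual of $\lambda$: a nonnegative $\nu$ on $\ZZ_{p^m}$ whose Fourier transform is constant on the nonzero cubes certifies $\lambda(C_0^{(p^m)})\ge s$ for the corresponding value $s$. The natural strategy is to tensor the dual optimizers of the modulus-$p$ and modulus-$p^{m-3}$ problems along the same valuation splitting, or equivalently to verify complementary slackness for the primal optimum $g_1\tilde g$ by producing a dual measure supported on its zero set. The delicate point is that the mixed cubes of valuation $0$ arising as $w+p^3z$ must also receive the correct constant Fourier value, so the two dual factors must be matched across valuation levels; this is exactly where the non-product nature of $\ZZ_{p^m}$ has to be handled by hand rather than by citing the direct-product theorem of \cite{int1}. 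Establishing this reverse recursion $\lambda(C_0^{(p^m)})\ge\lambda(C_0^{(p)})\,\lambda(C_0^{(p^{m-3})})$, together with the evident lower bounds in the base cases, completes the equality.

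Finally, for $p=3$ the unit analysis changes: since $3\mid|(\ZZ/3^k)^*|$, the subgroup $1+3\ZZ$ is no longer all cubes, and one checks instead that a unit is a cube modulo $3^k$ if and only if it is a cube modulo $9$. This obstruction at level $9$ is out of phase with the valuation shift by $3$, so the self-similar recursion closes up only after three steps, forcing $27=3^3$ as the indecomposable base block. Rerunning the construction and the dual argument with this base, and restricting to exponents divisible by $3$ to stay in phase, yields $\lambda(C_0^{(3^{3m})})=\lambda(C_0^{(27)})^m$.
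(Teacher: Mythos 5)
Your proposal establishes only half of the lemma: the statement asserts an \emph{equality}, and your lower bound $\lambda(C_0^{(p^m)})\ge\lambda(C_0^{(p)})\,\lambda(C_0^{(p^{m-3})})$ is left as a plan, not a proof. You say you ``would pass to the LP dual'' and ``tensor the dual optimizers,'' and you yourself identify the unresolved difficulty (the mixed unit cubes of the form $w+p^3z$ must receive the correct constant value from the tensored dual certificate) without resolving it; since $\ZZ_{p^m}$ is not a direct product of the valuation-$0$ shell and $p^3\ZZ_{p^{m-3}}$, this matching is precisely where a hand-built dual construction is hardest, and nothing in your sketch shows it can be done. The paper avoids dual certificates entirely: it descends one exponent at a time, taking the subgroup $H=\{0,p^{m-1},\dots,(p-1)p^{m-1}\}\cong\ZZ_p$ with $\ZZ_{p^m}/H\cong\ZZ_{p^{m-1}}$, and invokes the general subgroup--quotient inequality (Theorem 7.2, formula (29) in \cite{int1}). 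Because $C_0^{(p^m)}$ projects onto $C_0^{(p^{m-1})}$ and meets $H$ in $\{0\}$ when $3\nmid m-1$ and in a copy of $C_0^{(p)}$ when $3\mid m-1$, this gives $\lambda(C_0^{(p^m)})\ge\lambda(C_0^{(p^{m-1})})$, respectively $\lambda(C_0^{(p^m)})\ge\lambda(C_0^{(p^{m-1})})\lambda(C_0^{(p)})$, which is exactly the missing ingredient; the same tool (with $H\cong\ZZ_{27}$) closes the $p=3$ case, which your last paragraph likewise only asserts. As written, your argument proves $\lambda(C_0^{(p^m)})\le\lambda(C_0^{(p)})^{[(m+2)/3]}$ but not the reverse inequality.

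Your upper-bound construction, on the other hand, is essentially sound and genuinely different from the paper's: you recurse in steps of $3$ by multiplying a unit-cube witness $g_1$ with the pullback $\tilde g$ of an optimal witness modulo $p^{m-3}$, and your key observations are correct --- the frequency supports sit in disjoint valuations, and the mixed frequencies $j^3+p^3t=j^3(1+p^3tj^{-3})$ are again unit cubes because every unit $\equiv 1 \pmod p$ is a cube when $p\ne 3$. (The paper instead recurses one exponent at a time, spreading each coefficient $b_t^{(p^{m-1})}$ equally over the coset $t+H$ and adding a correction term $h_0$; its construction also delivers the self-compatibility \eqref{selfcomp} needed later in Theorem \ref{main3}, which your version would need to re-verify.) One genuine error to repair: the constant term of your $g_1$ is \emph{not} ``exactly $\lambda(C_0^{(p)})=\tfrac{2\sqrt p+1}{2\sqrt p+p}$'' --- that formula is only the Weil-bound upper estimate, and for instance $\lambda(C_0^{(13)})$, computed exactly in Theorem \ref{main3}, differs from it. The fix is easy: for $v_p(y)=m-1$, writing $y=p^{m-1}w$, the unit-cube sum equals $p^{m-1}\sum_{j=1}^{p-1}e(j^3w/p)$, and it vanishes for $v_p(y)\le m-2$; hence the optimally normalized $g_1$ has constant term equal to the equal-coefficient optimum modulo $p$, which equals $\lambda(C_0^{(p)})$ by the averaging argument (Proposition 5.2 in \cite{int1}). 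With that repair your upper bound stands, but the equality claimed by the lemma does not follow from your proposal.
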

\begin{proof}
Let $p\ne 3$. We prove the lemma by induction on $m$. For $m=1$ the statement is trivial. Let $m\ge 2$, and let us identify the residue classes modulo $p^m$ with the numbers $0, 1, \dots, p^m-1$.

\medskip

The structure of the cubic residues modulo $p^m$ is the following: for $0<t\le p^{m-1}-1$ and $0\le l\le p-1$ we have $t+lp^{m-1}\in C^{p^m}$ if and only if $t\in C^{p^{m-1}}$. For $t=0$, we have two different cases. If 3 does not divide $m-1$ then $lp^{m-1}\in C^{p^m}$ if and only if $l=0$. If $3|m-1$ then $lp^{m-1}\in C^{p^m}$ if and only if $l\in C_0^{(p)}$.

\medskip

Consider the subgroup $H=\{0, p^{m-1}, \dots, (p-1)(p^{m-1})\}\subset \ZZ_{p^m}$. Then $H\equiv \ZZ_p$, $\ZZ_{p^m}/H\equiv \ZZ_{p^{m-1}}$ and we can apply the general result concerning subgroups and factor groups in Theorem 7.2 in \cite{int1}. Namely, by formula (29) in \cite{int1}, and the structure of $C_0^{(p^m)}$ described above, we obtain $\lambda (C_0^{(p^m)}) \ge \lambda (C_0^{(p^{m-1})})$ if 3 does not divide $m-1$, and $\lambda (C_0^{(p^m)}) \ge \lambda (C_0^{(p^{m-1})}) \lambda (C_0^{(p)})$ if $3|m-1$. This proves the lower bound in the inductive step.

\medskip

To prove the upper bound, we apply a simple construction. Let
\begin{equation}
g^{(p)}(y)=b_0^{(p)}+\sum_{j\in C^{(p)}} b_j^{(p)}e(jy/p)
\end{equation}
be a witness function modulo $p$ such that $b_0^{(p)}$ is minimal, i.e. $b_0^{(p)}=\lambda(C_0^{(p)})$ (a simple compactness argument shows that the minimal value of $b_0^{(p)}$ can indeed be attained). Similarly, let
\begin{equation}
g^{(p^{m-1})}(y)=b_0^{(p^{m-1})}+\sum_{t\in C^{(p^{m-1})}} b_t^{(p^{m-1})}e(ty/p^{m-1})
\end{equation}
be a witness function modulo $p^{m-1}$ such that $b_0^{(p^{m-1})}=\lambda(C_0^{(p^{m-1})})$. Define
\begin{equation}
g^{(p^{m})}(z)=\sum_{t\in C^{(p^{m-1})}} \frac{b_t^{(p^{m-1})}}{p}\sum_{l=0}^{p-1}e((t+lp^{m-1})z/p^{m})+ h_0(z)
\end{equation}
where $h_0(z)=b_0^{(p^{m-1})}$ if 3 does not divide $m-1$, and $h_0(z)=b_0^{(p^{m-1})} (b_0^{(p)}+\sum_{j\in C^{(p)}} b_j^{(p)}e(jz/p))$ if $3|m-1$. It is clear that $g^{(p^{m})}(0)=1$ and the constant term of $g^{(p^{m})}(z)$ is equal to $b_0^{(p^{m-1})}=\lambda(C_0^{(p^{m-1})})$ if 3 does not divide $m-1$, while it is $b_0^{(p^{m-1})}b_0^{(p)}=\lambda(C_0^{(p^{m-1})})\lambda(C_0^{(p)})$ if $3|m-1$. We claim that $g^{(p^{m})}(z)$ is indeed a witness function modulo $p^m$, i.e. $g^{(p^{m})}(z)\ge 0$ for $z=0,1,\dots p^m-1$. If $z=py$ then a substitution into the definitions shows that
\begin{equation}\label{selfcomp}
g^{(p^{m})}(z)=g^{(p^{m})}(py)=g^{(p^{m-1})}(y)\ge 0.
\end{equation}
If $z=py+j$ where $j\ne 0$, then for each $t\in C^{(p^{m-1})}$ we have $\sum_{l=0}^{p-1}e((t+lp^{m-1})z/p^{m})=e(t(py+j)/p^m)\sum_{l=0}^{p-1}e(lj/p)=0$, therefore
\begin{equation}
g^{(p^{m})}(z)=g^{(p^{m})}(py+j)=h_0(py+j)=b_0^{(p^{m-1})}g^{(p)}(j)\ge 0.
\end{equation}
This proves the upper bound in the inductive step and completes the proof of the Lemma for $p\ne 3$.

\medskip

For $p=3$ the proof is similar. The structure of the cubic residues modulo $3^{3m}$ is the following: for $0<t\le 3^{3(m-1)}-1$ and $0\le l\le 26$ we have $t+l3^{3(m-1)}\in C^{3^{3m}}$ if and only if $t\in C^{3^{3(m-1)}}$. For $t=0$, we have $l3^{3(m-1)}\in C^{3^{3m}}$ if and only if $l\in C_0^{(27)}$.

\medskip

Consider the subgroup $H=\{0, 3^{3(m-1)}, \dots, 26(3^{3(m-1)})\}\subset \ZZ_{3^{3m}}$. Then $H\equiv \ZZ_{27}$, $\ZZ_{3^{3m}}/H\equiv \ZZ_{3^{3(m-1)}}$, and formula (29) in \cite{int1} implies $\lambda (C_0^{(3^{3m})}) \ge \lambda (C_0^{(3^{3(m-1)})}) \lambda (C_0^{(27)})$. This proves the lower bound in the inductive step.

\medskip

To prove the upper bound we define
\begin{equation}
g^{(3^{3m})}(z)=\sum_{t\in C^{(3^{3(m-1)})}} \frac{b_t^{(3^{3(m-1)})}}{27}\sum_{l=0}^{26}e((t+l3^{3(m-1)})z/3^{3m})+b_0^{(3^{3(m-1)})} (b_0^{(27)}+\sum_{j\in C^{(27)}} b_j^{(27)}e(jz/27)),
\end{equation}
and note that $g^{(3^{3m})}(0)=1$, the constant term is $\lambda(C_0^{(3^{3(m-1)})})\lambda(C_0^{(27)})$, and for $z=27y$ we have
\begin{equation}\label{selfcomp3}
g^{(3^{3m})}(z)=g^{(3^{3m})}(27y)=g^{(3^{3(m-1)})}(y)\ge 0,
\end{equation}
and for $z=27y+j$ where $j\ne 0$ we have
\begin{equation}
g^{(3^{3m})}(z)=g^{(3^{3m})}(27y+j)=b_0^{(3^{3(m-1)})}g^{(27)}(j)\ge 0.
\end{equation}
\end{proof}

This lemma enables us to prove our main result concerning cubic residues in cyclic groups.

\begin{Th}\label{main3}
For $\e=-\frac{\log \left (1-\frac{2}{2+\cos (\pi/13)+\sin (3\pi/26)}\right )}{3\log 13}\approx 0.1195$ there exists a self-compatible family $g^{(q)}(y)$ of cubic modular witness functions of the form \eqref{witnessq} such that $b_0\le q^{-\e}$. In particular,
\begin{equation}\label{cgeneral}
\lambda(C_0^{(q)})\le q^{-\e}
\end{equation}
for every $q\ge 1$.
\end{Th}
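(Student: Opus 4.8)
The plan is to reduce the uniform bound to a finite computation at the level of primes. First, since $C_0^{(q)}$ is the direct product of the sets $C_0^{(p^\alpha)}$ over the prime powers $p^\alpha\|q$, the direct product construction of \cite[Theorem 8.1]{int1} (its equation (35)) gives $\lambda(C_0^{(q)})=\prod_{p^\alpha\|q}\lambda(C_0^{(p^\alpha)})$. As $q^{-\e}=\prod_{p^\alpha\|q}(p^\alpha)^{-\e}$ is also multiplicative, it suffices to prove the local bound $\lambda(C_0^{(p^\alpha)})\le (p^\alpha)^{-\e}$ for every prime power. Applying Lemma \ref{primepower}, which evaluates $\lambda(C_0^{(p^\alpha)})=\lambda(C_0^{(p)})^{\lceil\alpha/3\rceil}$ for $p\ne 3$ (and reduces the case $p=3$ to the base values modulo $3,9,27$), this local bound becomes $\e\le \tfrac{\lceil\alpha/3\rceil}{\alpha}\cdot\tfrac{-\log\lambda(C_0^{(p)})}{\log p}$. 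Since $\lceil\alpha/3\rceil/\alpha$ has infimum $1/3$, attained exactly when $3\mid\alpha$, the binding constraint is $\e\le \e_p$, where $\e_p:=\tfrac{-\log\lambda(C_0^{(p)})}{3\log p}$. Thus the whole theorem reduces to showing $\e=\min_p \e_p$ and that the minimum is attained at $p=13$.

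Next I would dispatch all primes except a short list. For $p\equiv 2\pmod 3$ every residue is a cube, so $\lambda(C_0^{(p)})=1/p$ and $\e_p=1/3>\e$. For $p=3$ the base values $\lambda(C_0^{(3)})=1/3$, $\lambda(C_0^{(9)})$ and $\lambda(C_0^{(27)})$ are each at most $\tfrac{\cos(\pi/p^\alpha)}{1+\cos(\pi/p^\alpha)}<1/2$ already from the single-frequency witness supported on $\{\pm1\}\subset C^{(p^\alpha)}$, which forces every associated rate above $\e$ (here one extends the recursion of Lemma \ref{primepower} to all powers of $3$ by the same subgroup/factor-group argument of \cite[Theorem 7.2]{int1}, and the rate of any product of base factors, being a weighted average of their individual rates, stays above $\e$). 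For primes $s\equiv 1\pmod 3$ the normalised witness \eqref{3kp1} gives $\lambda(C_0^{(s)})\le \tfrac{2\sqrt s+1}{2\sqrt s+s}\le \tfrac{2}{\sqrt s}$, hence $\e_s\ge \tfrac16-\tfrac{\log 2}{3\log s}$; a short monotonicity check shows the sharper form \eqref{3kp1} already yields $\e_s>\e$ for all $s\ge 31$. Thus only the three primes $s\in\{7,13,19\}$ require individual treatment.

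The heart of the proof, and the main obstacle, is the exact extremal evaluation at $s=13$, which both pins down the value of $\e$ and exhibits $13$ as the minimiser. Here $C^{(13)}=\{\pm1,\pm5\}$, so one must minimise the constant term $b_0$ of $g(y)=b_0+2b_1\cos(2\pi y/13)+2b_5\cos(10\pi y/13)$ subject to $g\ge 0$ and $g(0)=1$. I would solve this two-parameter linear program by locating the residues at which the optimal $g$ is forced to vanish, solving the resulting determined system for $b_0,b_1,b_5$, and certifying optimality through a matching dual (a nonnegative measure supported on those residues, i.e. linear-programming duality); the outcome is $\lambda(C_0^{(13)})=1-\tfrac{2}{2+\cos(\pi/13)+\sin(3\pi/26)}$, which equals $13^{-3\e}$ by the very definition of $\e$. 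The other two primes are easy: at $s=7$ the only available frequency pair is $\pm1$, so the single-frequency witness is optimal and gives $\lambda(C_0^{(7)})=\tfrac{\cos(\pi/7)}{1+\cos(\pi/7)}$ with $\e_7>\e$; at $s=19$ one solves the three-parameter extremal problem and checks $\lambda(C_0^{(19)})<19^{-3\e}$, so $\e_{19}>\e$.

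Finally, self-compatibility is inherited for free from the construction. The prime-power witnesses produced in Lemma \ref{primepower} satisfy $g^{(p^m)}(py)=g^{(p^{m-1})}(y)$ by \eqref{selfcomp} and \eqref{selfcomp3}, which is exactly the relation $g^{(q_1)}(y_1)=g^{(q_2)}(y_2)$ when $y_1/q_1=y_2/q_2$ within a fixed prime; assembling $g^{(q)}$ from its prime-power factors by the direct product construction preserves this relation, so the resulting family is self-compatible and the proof is complete.
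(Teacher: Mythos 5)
Your overall architecture coincides with the paper's: reduce to prime powers via the direct product construction of \cite[Theorem 8.1]{int1}, convert the prime-power bound through Lemma \ref{primepower} into the per-prime rate $\e_p=\frac{-\log\lambda(C_0^{(p)})}{3\log p}$ (the paper's exponent $[(m+2)/3]$ is your $\lceil m/3\rceil$), dispatch $p\equiv 2\pmod 3$ trivially, handle primes $s\equiv 1\pmod 3$ with $s\ge 31$ by the character-sum witness \eqref{3kp1}, and settle $s=7,13,19$ by direct extremal computation, with $s=13$ the minimiser defining $\e$. Your explicit linear-programming plan at $13$, producing $\lambda(C_0^{(13)})=1-\frac{2}{2+\cos(\pi/13)+\sin(3\pi/26)}=13^{-3\e}$ with a dual certificate, is a fleshed-out version of the paper's ``direct computation shows,'' and your self-compatibility argument via \eqref{selfcomp}, \eqref{selfcomp3} and direct products is the paper's.

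There is, however, a genuine gap at $p=3$ for powers $3^\alpha$ with $3\nmid\alpha$. You assert that ``one extends the recursion of Lemma \ref{primepower} to all powers of $3$ by the same subgroup/factor-group argument,'' reducing to base values modulo $3$, $9$, $27$. That extension fails: the inductive construction rests on the structural fact that every lift $t+lp^{m-1}$ of a nonzero cubic residue $t$ modulo $p^{m-1}$ is again a cubic residue modulo $p^m$, and for $p=3$ this is false in steps of $3$ --- e.g.\ $2\in C^{(3)}$ but $2\notin C^{(9)}$, and $27\in C^{(81)}$ while $27+81=108\notin C^{(243)}$ (a cube modulo $243$ divisible by $3$ must be $27u^3$ with $u^3\equiv\pm 1\pmod 9$). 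Nor can you run steps of $27$ from the base $3^1$: all units modulo $3$ are cubes, but the kernel of reduction from $(\ZZ_{81})^\times$ (order $27$) is not contained in the cubes (order $18$), so the fiber structure breaks. This is precisely why Lemma \ref{primepower} is stated only for moduli $3^{3m}$. The paper closes this gap by a dilation you do not supply: for $q=3^{3m-1}$ and $q=3^{3m-2}$ it \emph{defines} $g^{(q)}(y):=g^{(3^{3m})}(3y)$ and $g^{(q)}(y):=g^{(3^{3m})}(9y)$, which is legitimate because a cube modulo $3^{3m}$ reduces to a cube modulo $3^{3m-1}$ and a nonzero cubic residue cannot reduce to $0$; the constant term is unchanged, and $\lambda(C_0^{(3^\alpha)})\le \lambda(C_0^{(27)})^{\lceil \alpha/3\rceil}\le 3^{-\alpha\e}$ follows from the computed value of $\lambda(C_0^{(27)})$ (even your single-frequency bound $\lambda(C_0^{(27)})\le \cos(\pi/27)/(1+\cos(\pi/27))$ gives rate about $0.21>\e$, so your numerics survive). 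The dilation definition also yields self-compatibility along the $3$-adic tower for free, which your recursion sketch, even if repaired, would still owe. Apart from this one step, your argument is sound and is essentially the paper's.
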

\begin{proof}
If $q\ne 3$ is a prime of the form $3k+2$ we define the modular witness function $g^{(q)}(y)$ by equation \eqref{3kp2}, and note that $\lambda(C_0^{(q)})=q^{-1}$. If $q$ is a prime of the form $3k+1$ we define $g^{(q)}(y)=b_0^{(q)}+b^{(q)}\sum_{j=1}^{q-1}e(j^3y/q)$ such that $b_0^{(q)}+(q-1)b^{(q)}=1$, $g^{(q)}(y)\ge 0$ for $0\le y\le q-1$, and $b_0^{(q)}$ is minimal possible,  i.e. $b_0^{(q)}=\lambda(C_0^{(q)})$ (it is easy to see that all the coefficients of $e(j^3y/q)$ can be assumed to be equal by averaging; cf. \cite[Proposition 5.2]{int1}). By equation \eqref{3kp1} we note that $\lambda(C_0^{(q)})\le \frac{2\sqrt{q}+1}{2\sqrt{q}+q}\le q^{-0.36}$ for $q\ge 31$. For $q=7, 13$, and $19$ direct computation shows that $\lambda(C_0^{(q)})\le q^{-3\e}$ with equality for $q=13$.

\medskip

If $q=p^m$ is a power of a prime $p\ne 3$ then we define $g^{(q)}(y)$ by induction on $m$ as in Lemma \ref{primepower}. Equation \eqref{lambdapn3} shows that $\lambda(C_0^{(q)})\le q^{-\e}$. The self-compatibility property follows from equation \eqref{selfcomp}.

\medskip

For $q=27$ we define $g^{(27)}(y)=b_0^{(27)}+ b^{(27)}\sum_{j\in C^{(27)}}e(jy/27)$ such that $g^{(27)}(0)=1$, $g^{(27)}(y)\ge 0$ for $y=0,1, \dots, 26$, and $b_0^{(27)}=\lambda(C_0^{(27)})$ (again, the non-leading coefficients can be assumed to be equal by averaging). For $q=3^{3m}$ we define $g^{(q)}(y)$ by induction on $m$ as in Lemma \ref{primepower}. For $q=3^{3m-1}$ and $q=3^{3m-2}$ we define $g^{(q)}(y)=g^{(3^m)}(3y)$ and $g^{(q)}(y)=g^{(3^m)}(9y)$, respectively. Self-compatibility follows from this definition and equation \eqref{selfcomp3}. Direct computation of $\lambda(C_0^{(27)})$ and equation \eqref{lambda3} shows that for any $q=3^\alpha$ we have $\lambda(C_0^{(q)})\le q^{-\e}$.

\medskip

Finally, let $q=p_0^{\alpha_0}p_1^{\alpha_1}\dots p_r^{\alpha_r}$ be the prime factorization of $q$, where $p_0=3$ if it appears in $q$. The set of cubic residues $C_0^{(q)}$ modulo $q$ is symmetric to zero and equals the direct product of the sets $C_0^{(p^\alpha)}$. Furthermore, as in the construction of
\cite[Theorem 8.1]{int1} we define the cubic witness function $g^{(q)}(y)=b_0^{(q)}+\sum_{j\in C^{(q)}}b_j^{(q)}e(jy/q)$ as the direct product of the cubic modular witness functions $g^{(p^\alpha)}$. It is straightforward that the self-compatibility property is preserved under direct products, and $\lambda(C_0^{(q)})\le b_0^{(q)}=b_0^{(3^{\alpha_0})} \left ( \prod_{i=1}^{r}\lambda(C_0^{(p_i^{\alpha_r})}) \right )\le q^{-\e}$.
\end{proof}

Again, via Theorem 1.4 in \cite{int1} the bound \eqref{cgeneral} above immediately implies the same upper bound for the density of sets avoiding cubic residues:

\begin{Cor}
For $\e=-\frac{\log \left (1-\frac{2}{2+\cos (\pi/13)+\sin (3\pi/26)}\right )}{3\log 13}\approx 0.1195$ and any positive integer $q$, the density $\delta(B_q)$ of any set $B_q \subset \ZZ_q$ such that the difference set $B_q-B_q$ avoids cubic residues modulo $q$ satisfies $\delta(B_q)\le q^{-\e}$.
\end{Cor}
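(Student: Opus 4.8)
The plan is to obtain the corollary as an immediate consequence of Theorem \ref{main3} together with the general duality between nonnegative exponential sums and intersective sets recorded in \cite[Theorem 1.4]{int1}. The key point is that a single cubic modular witness function already certifies a density bound: if $g^{(q)}$ is any function of the form \eqref{witnessq} with constant term $b_0$, then every $B_q\subset\ZZ_q$ with $(B_q-B_q)\cap C^{(q)}=\emptyset$ satisfies $|B_q|\le b_0 q$, that is $\delta(B_q)=|B_q|/q\le b_0$. Optimizing over all admissible witness functions turns this into $\delta(B_q)\le\lambda(C_0^{(q)})$, which is exactly what \cite[Theorem 1.4]{int1} provides.

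For completeness I would recall the mechanism behind this inequality, which is the linear-programming (Fourier) argument already carried out in \cite{int1}. Passing to the physical side, let $h\colon\ZZ_q\to\RR$ be the function supported on $C_0^{(q)}$ whose Fourier transform is $\widehat h=g^{(q)}\ge0$, as explained in the discussion preceding Definition \ref{ldef}; after absorbing the degenerate frequencies $j$ with $q\mid j^3$ (for which $e(j^3y/q)\equiv1$ is constant) into the constant term one has $h(0)=b_0$. Evaluating the double sum $\sum_{x,y\in B_q}h(x-y)$ in two ways gives the bound. On the physical side the representation function of $B_q$ is supported on $B_q-B_q$, which meets the support $C_0^{(q)}$ of $h$ only at $0$, so the sum equals $h(0)\,|B_q|=b_0|B_q|$; on the Fourier side Parseval rewrites the same sum as $\tfrac1q\sum_\xi g^{(q)}(\xi)\,|\widehat{\mathbf{1}_{B_q}}(\xi)|^2$, where every summand is nonnegative because $g^{(q)}\ge0$, and the $\xi=0$ term alone contributes $|B_q|^2/q$. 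Comparing the two evaluations yields $b_0|B_q|\ge|B_q|^2/q$, i.e. $\delta(B_q)\le b_0$.

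With this duality in hand the corollary follows at once: Theorem \ref{main3} supplies, for the stated constant $\e\approx0.1195$ and every $q\ge1$, a witness function with $b_0\le q^{-\e}$, hence $\lambda(C_0^{(q)})\le q^{-\e}$, and therefore $\delta(B_q)\le\lambda(C_0^{(q)})\le q^{-\e}$. No genuine obstacle remains, since all the analytic and combinatorial content has already been discharged in Theorem \ref{main3} and in \cite[Theorem 1.4]{int1}; I expect the only point needing care to be purely bookkeeping, namely ensuring that the exponent $\e$ quoted in the corollary coincides verbatim with the one in Theorem \ref{main3} so that the bound is inherited without any loss. Note in particular that the self-compatibility of the family, although it is the principal novelty of Theorem \ref{main3}, is not used here; only the numerical control of the constant term enters.
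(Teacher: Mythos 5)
Your proposal is correct and follows exactly the paper's route: the corollary is deduced immediately by combining Theorem \ref{main3} (which gives $\lambda(C_0^{(q)})\le q^{-\e}$) with the duality $\delta(B_q)\le\lambda(C_0^{(q)})$ of \cite[Theorem 1.4]{int1}, which is all the paper says. Your additional unpacking of the Parseval/double-counting mechanism behind that duality (including the careful treatment of degenerate frequencies $j$ with $q\mid j^3$) is accurate but not needed, since the paper simply cites \cite{int1} for it.
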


\centerline{\textsc{Acknowledgement}}

The authors are grateful for the reviewer for his/her suggestions which have improved the presentation of the paper.

\end{document}